 \title{A New Error in Variables Model for Solving Positive Definite Linear System Using Orthogonal Matrix Decompositions}
\author{Negin Bagherpour \thanks{Faculty of Mathematical Sciences, Sharif University of Technology, Tehran, Iran, (nbagherpour@mehr.sharif.ir).}
\and Nezam Mahdavi-Amiri \thanks{Faculty of Mathematical Sciences, Sharif University of Technology, Tehran, Iran, (nezamm@sharif.ir).}}
\newcommand{\tr}{\mathop{\mathrm{tr}}}
\newcommand{\Null}{\mathop{\mathrm{Null}}}
\begin{document}
\maketitle
\begin{abstract}
The need to estimate a positive definite solution to an overdetermined linear system of equations with multiple right hand side vectors arises in several process control contexts. The coefficient and the right hand side matrices are respectively named data and target matrices. A number of optimization methods were proposed for solving such problems, in which the data matrix is unrealistically assumed to be error free. Here, considering error in measured data and target matrices, we present an approach to solve a positive definite constrained linear system of equations based on the use of a newly defined error function. To minimize the defined error function, we derive necessary and sufficient optimality conditions and outline a direct algorithm to compute the solution. We provide a comparison of our proposed approach and two existing methods, the interior point method and a method based on quadratic programming. Two important characteristics of our proposed method as compared to the existing methods are computing the solution directly and considering error both in data and target matrices. Moreover, numerical test results show that the new approach leads to smaller standard deviations of error entries and smaller effective rank as desired by control problems. Furthermore, in a comparative study, using the Dolan-Mor\'{e} performance profiles, we show the approach to be more efficient.\\
\end{abstract}
\begin{keywords}
Error in variables models, positive definiteness constraints, overdetermined linear system of equations, multiple right hand side vectors
\end{keywords}
\begin{AMS}
65F05, 65F20, 49M05
 \end{AMS}

\markboth{NEGIN BAGHERPOUR AND NEZAM MAHDAVI-AMIRI}{Solving Positive Definite Total Least Squares Problems}

\section{Introduction}
Computing a symmetric positive definite solution of an overdetermined linear system of equations arises in a number of physical problems such as estimating the mass inertia matrix in the design of controllers for solid structures and robots; see, e.g., \cite{9}, \cite{17}, \cite{14}. Modeling a deformable structure also leads to such a mathematical problem; e.g., see \cite{25}. The problem turns into finding an optimal solution of the system
\begin{equation}\label{1}
  DX \simeq T,
\end{equation}
where $D,T \in {\mathbb{R}}^{m \times n}$, with $m\geq n$, are given and a symmetric positive definite matrix
$X \in {\mathbb{R}}^{n \times n}$ is to be computed as a solution of (\ref{1}). In some special applications, the data matrix $D$ has a simple structure, which may be taken into consideration for efficiently organized computations. Estimation of the covariance matrix and computation of the correlation matrix in finance are two such examples where the data matrices are respectively block diagonal and the identity matrix; e.g., see \cite{31}.\\
A number of least squares formulations have been proposed for physical problems, which may be classified as ordinary and error in variables (EIV) models.\\
Also, single or multiple right hand side least squares may arise. With a single right hand side, we have an overdetermined linear system of equations $Dx\simeq t$, where $D \in {\mathbb{R}}^{m \times n}$, $t \in {\mathbb{R}}^{m \times 1}$, with $m\geq n$, are known and the vector $x\in {\mathbb{R}}^{n \times 1}$ is to be computed. In an ordinary least squares formulation, the error is only attributed to $t$. So, to minimize the corresponding error, the following mathematical problem is devised:
\begin{eqnarray}\nonumber
&\min& \|\Delta t\| \\
&s. t.&  Dx=t+\Delta t. \label{2}
\end{eqnarray}
There are a number of methods for solving (\ref{2}), identified as direct and iterative methods. A well known direct method is based on using the QR factorization of the matrix $D$ \cite{27}. An iterative method has also been introduced in \cite{7} for solving (\ref{2}) using the GMRES algorithm.\\
In an EIV model, however, errors in both $D$ and $t$ are considered; e.g., see \cite{3}. Total least squares formulation is a well-known EIV model, where the goal is to solve the following mathematical problem (e.g., see \cite{6} and \cite{16}):
\begin{eqnarray}\nonumber
&\min& \|[\Delta D, \Delta t]\| \\
&s. t.&  (D+\Delta D)x=t+\Delta t. \label{3}
\end{eqnarray}

We note that {$\|\cdot\|$} in (\ref{2}) and (\ref{3}) respectively denote the vector 2-norm and the matrix Frobenius norm. Both direct \cite{24} and iterative \cite{12} methods have been presented for solving (\ref{3}). Moreover, the scaled total least squares formulation has been considered to unify both ordinary and total leats squares formulation; e.g., see \cite{24}. In a scaled toal least squares formulation, the mathematical problem 
\begin{eqnarray}\label{3333}
& \min \|[\Delta D, \Delta t]\| \nonumber\\
& s. t. (D+\Delta D)x=\lambda t+\Delta t
\end{eqnarray}
is to be solved for an arbitrary scalar $\lambda$. Zhou \cite{19} has studied the effect of perturbation and gave an error analysis of such a formulation.\\

A least squares problem with multiple right hand side vectors can also be formulated as an overdetermined system of equations $DX\simeq T$, where $D \in {\mathbb{R}}^{m \times n}$, $T \in {\mathbb{R}}^{m \times k}$, with $m\geq n$, are given and the matrix $X \in {\mathbb{R}}^{n \times k}$ is to be computed. With ordinary and total least squares formulations, the respective mathematical problems are:
\begin{eqnarray}\nonumber
&\min& \|\Delta T\| \\
&s. t.&  DX=T+\Delta T \nonumber\\
&& X \in {\mathbb{R}}^{n \times k}\label{4}
\end{eqnarray}
and
\begin{eqnarray}\nonumber
&\min& \|[\Delta D, \Delta T]\| \\
&s. t.&  (D+\Delta D)X=T+\Delta T \nonumber\\
&& X \in {\mathbb{R}}^{n \times k}.\label{5}
\end{eqnarray}

Common methods for solving (\ref{4})  are similar to the ones for (\ref{2}); see, e.g., \cite{7}, \cite{27}. Solving (\ref{5}) is possible by using the method described in \cite{8}, based on the SVD factorization of the matrix $[D,\hspace{0.1cm} T]$. Connections between ordinary least squares and total least squares formulations have been discussed in \cite{11}.\\

Here, we consider a newly defined EIV model for solving a positive definite linear problem. Our goal is to compute a symmetric positive definite solution $X \in {\mathbb{R}}^{n \times n}$ to the overdetermined system of equations $DX\simeq T$, where both matrices $D$ and $T$ may contain errors. We refer to this problem as positive definite linear system of equations later. No EIV model, even the well-known total least squares formulation, is considered for solving the positive definite linear system of equations in the literature. Several approaches have been proposed for this problem, commonly considering the ordinary least squares formulation and minimizing the error ${\| \Delta T\|}_F$ over all $n \times n$ symmetric positive definite matrices, where ${\|.\|}_F$ is the Frobenious norm; see e.g. \cite{10,23}.
Larson \cite{13} discussed a method for solving a positive definite least squares problem considering the corresponding normal system of equations. He considered both symmetric and positive definite least squares problems. Krislock \cite{25} proposed an interior point method for solving a variety of least squares problems with positive semi-definite constraints. Woodgate \cite{18} described a new algorithm for solving a similar problem in which a symmetric positive semi-definite matrix $P$ is computed to minimize $\|F-PG\|$, with known $F$ and $G$. Hu \cite{10} presented a quadratic programming approach to handle the positive definite constraint. In her method, the upper and lower bounds for the entries of the target matrix can be given as extra constraints.
In real measurements, however, both the data and target matrices may contain errors; hence, the total least squares formulation appears to be appropriate.\\
The rest of our work is organized as follows. In Section 2, we define a new error function and discuss some of its characteristics. A method for solving the resulting optimization problem with the assumption that $D$ has full
column rank is presented in Section 3. In Section 4, we generalize the method to the case of data matrix having an arbitrary rank.
In Section 5, a detailed discussion is made on computational complexity of both methods. Computational results and comparisons with available methods are given in Section 6. Section 7 gives our concluding remarks.

\section{Problem Formulation}
Consider a single equation $ax\simeq b$, where $a,b \in
{\mathbb{R}}^{n}$ and $x\in {\mathbb{R}}^+$. Errors in the $i$th entry of $b$ and $a$ are respectively equal to
$\mid a_ix-b_i \mid$ and $\mid a_i-\frac{b_i}{x} \mid$; e.g., see \cite{24}.\\
In \cite{24}, ${\sum}_{i=1}^{n} L_i$ was considered as a value to represent errors in both $a$ and $b$. As shown in Figure \ref{f1}, $L_i$ is the height of the triangle ABC which turns to be equal to $L_i=\frac{|b_i-a_ix|}{\sqrt{1+x^2}}$. Here, to represent the errors in both $a$ and $b$, we define the area error to be
\begin{equation}\label{77}
{\sum}_{i=1}^{n} |b_i-a_i x||a_i-\frac{b_i}{x}|,
\end{equation}
which is equal to
\[{\sum}_{i=1}^{n} (b_i-a_i x)(a_i-\frac{b_i}{x}),\]
for $x\in {\mathbb{R}}^+$.\\

Considering the problem of finding a symmetric and positive definite solution to the overdetermined system of linear equations $DX \simeq T$, in
which both $D$ and $T$ include error, the values $DX$ and $TX^{-1}$ are predicted values for $T$ and $D$ from the model $DX\simeq T$; hence, vectors ${\Delta T}_{j}={(DX-T)}_j$ and ${\Delta D}_{j}={(D-TX^{-1})}_{j}$ are the entries of errors in the $j$th column of $T$ and $D$, respectively.
Extending the error formulation (\ref{77}), the value
\begin{equation}\nonumber
E={\sum}_{j=1}^{n}(DX_j-T_j)^T(D_j-(TX^{-1})_j
\end{equation}
seems to be an appropriate measure of error. We also have
\begin{equation}\label{88}
E={\sum}_{j=1}^{n}{\sum}_{i=1}^{m}{(DX-T)}_{ij}{(D-TX^{-1})}_{ij}= \tr ((DX-T)^T (D-TX^{-1})),
\end{equation}
with $\tr(.)$ standing for trace of a matrix. Therefore, the problem can be formulated as
 \begin{equation}\label{6}
  \min\limits_{X\succ 0} \tr((DX-T)^T (D-TX^{-1})),
\end{equation}
where $X$ is symmetric and by $X\succ 0$, we mean $X$ is positive
definite. Problem (9) poses a newly defined EIV model for solving the positive definite linear system of equations.\\
In Lemma \ref{9}, we represent an equivalent formulation for the error, $E$. First, consider to a well-known property of positive definite matrices.
\textbf{Note}
A matrix $X\in {\mathbb{R}}^{n\times n}$ is positive definite if and only if there exists a nonsingular matrix $Y\in {\mathbb{R}}^{n\times n}$ such that $X=YY^T$.\\\\

The following results about the trace operator are also well-known; e.g., see \cite{21}.\\

\begin{lemma}\label{8}
For an nonsingulartible matrix $P \in {\mathbb{R}}^{n \times n}$ and arbitrary matrices $Y \in {\mathbb{R}}^{n \times
n}$, $A\in {\mathbb{R}}^{m \times n}$ and $B \in {\mathbb{R}}^{n \times m}$ we have\\\\
$\indent (1) \indent \tr(Y)=\tr(P^{-1}YP).$\\
$\indent (2) \indent \tr(AB)=\tr(BA).$\\\\
\end{lemma}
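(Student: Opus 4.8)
The plan is to prove part (2) first by a direct entrywise computation, and then to deduce part (1) as an immediate consequence of (2) together with the associativity of matrix multiplication.

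For part (2), I would expand the trace as the sum of the diagonal entries of the product, $\tr(AB)=\sum_{i=1}^{m}(AB)_{ii}=\sum_{i=1}^{m}\sum_{j=1}^{n}A_{ij}B_{ji}$. Since this is a finite double sum, the order of summation may be interchanged freely, which gives $\sum_{j=1}^{n}\sum_{i=1}^{m}B_{ji}A_{ij}=\sum_{j=1}^{n}(BA)_{jj}=\tr(BA)$. The one point to keep in mind is that $AB\in{\mathbb{R}}^{m\times m}$ whereas $BA\in{\mathbb{R}}^{n\times n}$, so the two traces are sums over index sets of different sizes; nevertheless both equal the same scalar $\sum_{i,j}A_{ij}B_{ji}$.

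For part (1), I would invoke (2) with the particular choice $A:=P^{-1}Y$ and $B:=P$, both of which lie in ${\mathbb{R}}^{n\times n}$ and are therefore conformable. Then $\tr(P^{-1}YP)=\tr\big((P^{-1}Y)P\big)=\tr\big(P(P^{-1}Y)\big)=\tr\big((PP^{-1})Y\big)=\tr(Y)$, where the second equality is part (2) and the remaining equalities use associativity of matrix multiplication together with $PP^{-1}=I$; this is the only place where the nonsingularity of $P$ is actually needed.

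There is essentially no real obstacle here, since both identities are elementary and classical. The only items requiring any care are the index bookkeeping in the double sum for (2) and checking that every matrix product written in the derivation of (1) is dimensionally compatible, which is automatic because all the matrices appearing there are square of order $n$.
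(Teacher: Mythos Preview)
Your argument is correct and is the standard elementary proof of these trace identities. The paper itself does not supply a proof of this lemma at all: it is stated as a well-known fact with a citation to \cite{21}, so there is no ``paper's proof'' to compare against beyond noting that what you wrote is exactly the classical derivation one finds in the referenced text.
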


\begin{lemma}\label{9}
The error $E$, defined by (\ref{88}), is equal to
\begin{equation}\label{10}
E=\|DY-TY^{-T}\|_F^2
\end{equation}
where $X=YY^T$ and $\|.\|_F$ denotes the Frobenius norm of a matrix.
\end{lemma}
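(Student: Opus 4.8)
The plan is to verify the identity by expanding both sides into four trace terms apiece and matching them using the cyclic invariance of the trace (Lemma \ref{8}(2)) together with the substitutions $X = YY^T$ and $X^{-1} = Y^{-T}Y^{-1}$. The latter substitution is legitimate precisely because $X \succ 0$ forces, by the Note, the existence of a nonsingular $Y$ with $X = YY^T$, so that $Y^{-1}$, $Y^{-T}$ and $X^{-1}$ are all well defined.

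First I would rewrite the right-hand side as $\|DY - TY^{-T}\|_F^2 = \tr\big((DY - TY^{-T})^T(DY - TY^{-T})\big)$ and distribute, obtaining the four terms $\tr(Y^T D^T D Y)$, $-\tr(Y^T D^T T Y^{-T})$, $-\tr(Y^{-1} T^T D Y)$ and $\tr(Y^{-1} T^T T Y^{-T})$. Applying Lemma \ref{8}(2) to cycle factors and using $Y^{-T}Y^T = I$, $Y Y^{-1} = I$, $Y Y^T = X$ and $Y^{-T} Y^{-1} = X^{-1}$, these reduce respectively to $\tr(D^T D X)$, $-\tr(D^T T)$, $-\tr(T^T D)$ and $\tr(X^{-1} T^T T)$.

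Next I would expand the left-hand side $E = \tr\big((DX-T)^T(D-TX^{-1})\big) = \tr\big((X^T D^T - T^T)(D - T X^{-1})\big)$ term by term, getting $\tr(X^T D^T D) - \tr(X^T D^T T X^{-1}) - \tr(T^T D) + \tr(T^T T X^{-1})$. Invoking the symmetry of $X$ to replace $X^T$ by $X$, and cycling traces — in particular $\tr(X D^T T X^{-1}) = \tr(X^{-1} X D^T T) = \tr(D^T T)$ and $\tr(T^T T X^{-1}) = \tr(X^{-1} T^T T)$ — this becomes $\tr(D^T D X) - \tr(D^T T) - \tr(T^T D) + \tr(X^{-1} T^T T)$, which is exactly the expression obtained for the right-hand side. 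Hence $E = \|DY - TY^{-T}\|_F^2$.

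The argument is a routine change-of-variables computation, so I do not expect a genuine obstacle; the only points demanding care are invoking the Note so that $Y$, and therefore $Y^{-1}$, $Y^{-T}$ and $X^{-1}$, are available; using the hypothesis that $X$ is symmetric when replacing $X^T$ by $X$; and keeping the cyclic reorderings of the trace arguments consistent, especially ensuring the two mixed terms $\tr(D^T T)$ and $\tr(T^T D)$ are matched correctly on both sides. This lemma serves only to recast the error $E$ in the Frobenius-norm form that drives the algorithmic development of Section 3.
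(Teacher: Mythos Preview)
Your proof is correct. You expand each side into four trace terms and match them via cyclic invariance and the substitutions $X=YY^T$, $X^{-1}=Y^{-T}Y^{-1}$; every step checks out, including the handling of the two mixed terms.

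The paper, however, avoids the four-term expansion altogether. Its key observation is the factorisation $D-TX^{-1}=(DX-T)X^{-1}$, which immediately gives $E=\tr\big((DX-T)^T(DX-T)X^{-1}\big)$. Writing $X^{-1}=Y^{-T}Y^{-1}$ and cycling the trailing $Y^{-1}$ to the front yields $\tr\big(((DX-T)Y^{-T})^T(DX-T)Y^{-T}\big)=\|DXY^{-T}-TY^{-T}\|_F^2$, and since $XY^{-T}=YY^TY^{-T}=Y$ this is $\|DY-TY^{-T}\|_F^2$. So the paper's route is a two-line manipulation that stays ``bundled'' and never sees the cross terms, whereas yours is a direct expand-and-match. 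Both rely on exactly the same ingredients (Lemma~\ref{8} and the decomposition $X=YY^T$); the paper's version is a bit slicker, yours is more mechanical but equally rigorous and perhaps easier to verify line by line.
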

\begin{proof}
Substituting $X=YY^T$ in (\ref{88}) and using Lemma \ref{8}, we get
\begin{eqnarray}\nonumber
E&=&\tr((DX-T)^T (D-TX^{-1}))=\tr((DX-T)^T (DX-T)X^{-1})\nonumber\\
&=&\tr((DX-T)^T (DX-T)Y^{-T}Y^{-1})=\tr (Y^{-1}(DX-T)^T (DX-T)Y^{-T})\nonumber\\
&=&\tr ({(DXY^{-T}-TY^{-T})}^T (DXY^{-T}-TY^{-T}))\nonumber\\
&=&\tr( {(DY-TY^{-T})}^T(DY-TY^{-T}))\nonumber\\
&=&\|DY-TY^{-T}\|_F^2.\nonumber
\end{eqnarray}
\end{proof}
Considering this new formulation for $E$, it can be concluded that by use of our newly defined EIV model, computing a symmetric and positive definite solution to the over-determined system of equations $DX \simeq T$ is equivalent to computing a nonsingular matrix $Y\in {\mathbb{R}}^{n\times n}$ to be the solution of
\[\min \|DY-TY^{-T}\|_F^2,\]
and letting $X=YY^T$. A similar result is obtained by considering the over-determined system $DX \simeq T$ with $X=YY^T$ and multiplying both sides by $Y^{-T}$. We have,
\[DYY^T\simeq T,\]
or equivalently,
\begin{equation}\label{11}
DY \simeq TY^{-T}.
\end{equation}
Now, to assign a solution to (\ref{11}), it makes sense to minimize the norm of residual. Thus, to compute $X=YY^T$, it is sufficient to let $Y$ to be the solution of 
\[\min \|DY-TY^{-T}\|_F^2.\]

\textbf{Note} An appropriate characteristic of the error formulation proposed by (\ref{88}) is that for a symmetric and positive definite matrix $X$, the value of $E$ is nonnegative and it is equal to zero if and only if $DX=T$.\\\\
\section{\textbf{Mathematical Solution: Full Rank Data Matrix}}
Here, we are to develop an algorithm for solving
(\ref{6}) with the assumption that $D$ has full column
rank.\\
Using Lemma ~\ref{8}, with $X$ being symmetric, we have
$$\tr({(DX-T)}^T(D-TX^{-1}))=\tr(D^TDX+X^{-1}T^TT)-2\tr(T^TD).$$\\
So, (\ref{6}) can be written as
\begin{equation}\label{12}
\min \tr(AX+X^{-1}B),
\end{equation}
where $A=D^TD$ and $B=T^TT$ and the symmetric and positive definite matrix $X$ is to be computed.\\

\begin{corollary}
For each $X^{\ast}$ satisfying the first order necessary conditions of (\ref{12}), the sufficient optimality conditions described in Theorem ~\ref{14} are satisfied and since $\Phi(X)=\tr(AX+X^{-1}B)$ is convex on the cone of symmetric positive definite matrices, we can confirm that the symmetric positive definite matrix satisfying the KKT necessary conditions mentioned in Theorem ~\ref{13} is the unique global solution of (\ref{12}).\\
\end{corollary}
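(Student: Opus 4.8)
The plan is to let convexity of the objective carry the argument, reducing the corollary to the two theorems it cites. Write $\Phi(X)=\tr(AX+X^{-1}B)$ with $A=D^TD$ and $B=T^TT$, and note at the outset that the feasible set of (\ref{12}) --- the symmetric positive definite matrices --- is an \emph{open} convex cone, so the first-order necessary conditions at a feasible minimizer are simply the stationarity equations $\nabla\Phi(X)=0$, which is exactly the form Theorem~\ref{13} works with.

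The first thing I would do is establish that $\Phi$ is convex on that cone. Since $\tr(AX)$ is affine in $X$, this reduces to convexity of $\tr(X^{-1}B)$, and here the key point is the identity
\[
\tr(X^{-1}B)=\tr\!\left(B^{1/2}X^{-1}B^{1/2}\right)=\sum_{i=1}^{n}(B^{1/2}e_i)^T X^{-1}(B^{1/2}e_i),
\]
valid because $B=T^TT\succeq 0$; this exhibits $\tr(X^{-1}B)$ as a sum of matrix-fractional functions $v\mapsto v^TX^{-1}v$, each convex on the positive definite cone since its second derivative along a symmetric direction $H$ equals $2\,v^TX^{-1}HX^{-1}HX^{-1}v=2\|X^{-1/2}HX^{-1}v\|^2\ge 0$. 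Summing, the Hessian of $\tr(X^{-1}B)$ along $H$ is $2\|X^{-1/2}HX^{-1}B^{1/2}\|_F^2$, which is nonnegative in general and strictly positive for every nonzero symmetric $H$ when $B$ is nonsingular; hence $\Phi$ is convex, and strictly convex precisely when $T$ has full column rank.

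With convexity in hand the rest is routine. On a convex feasible set the first-order necessary conditions are also sufficient for global optimality, so any $X^{\ast}$ satisfying the conditions of Theorem~\ref{13} is a global minimizer; and since every global minimizer lies in the open cone and is therefore stationary, while Theorem~\ref{13} exhibits only one symmetric positive definite stationary point (or, independently, strict convexity forces uniqueness in the full-rank case), that point is the unique global solution of (\ref{12}). Finally, the Hessian computation above shows $\nabla^2\Phi\succeq 0$ at every feasible point, so the second-order part of the sufficient conditions of Theorem~\ref{14} holds automatically and such an $X^{\ast}$ satisfies those conditions.

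I expect the only genuine work to be the convexity verification for $\tr(X^{-1}B)$ --- isolating the matrix-fractional structure, differentiating $X\mapsto X^{-1}$ correctly along symmetric directions, and tracking which hypothesis each conclusion needs ($B\succeq 0$ for convexity, $B\succ 0$ for strict convexity and hence for deducing uniqueness from convexity alone). Everything after that is a direct appeal to the cited theorems and to standard convex-optimization facts.
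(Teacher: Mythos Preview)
Your proposal is correct. The paper itself offers no separate proof of this corollary; it is stated as an immediate consequence of Theorems~\ref{13} and~\ref{14} together with the asserted convexity of $\Phi$, and the convexity claim is not argued there. Your write-up therefore supplies strictly more than the paper does: you actually verify convexity via the matrix-fractional decomposition $\tr(X^{-1}B)=\sum_i (B^{1/2}e_i)^T X^{-1}(B^{1/2}e_i)$ and the second-derivative computation, and you correctly isolate that $B\succeq 0$ gives convexity while $B\succ 0$ (full column rank of $T$) gives strict convexity and hence uniqueness from convexity alone. The only remark is that your alternative route to uniqueness --- ``Theorem~\ref{13} exhibits only one symmetric positive definite stationary point'' --- leans on content you have not seen; in the paper that fact is the equation $XAX=B$ together with Higham's uniqueness result for its positive definite solution when $A,B\succ 0$, so it is fine, but your strict-convexity argument already suffices and is self-contained.
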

\subsection*{\textbf{Computing the positive definite matrix satisfying KKT conditions}}
As mentioned in Theorem \ref{13}, the KKT conditions lead to the nonlinear matrix equation 
\begin{equation}\label{116}
XAX=B.
\end{equation}
Note that ({\ref{116}}) is an special case of the continuous time Riccati equation (CARE), \cite{22}
\begin{equation}\label{115}
A^TXE+E^TXA-(E^TXB+S)R^{-1}(B^TXE+S^T)+Q=0,
\end{equation}
with $R=0$, $E=\frac{A}{2}$ and $Q=-B$. There is a MATLAB routine to solve CARE for arbitrary values of $A$, $E$, $B$, $S$, $R$ and $Q$. To use the routine, it is sufficient to type the command

\begin{verbatim}
X=care(A,B,Q,R,S,E),
\end{verbatim}
for the input arguments as in (\ref{115}). Higham \cite{22} developed an effective method for computing the positive definite solution to this special CARE when $A$ and $B$ are symmetric and positive definite using well-known decompositions. Lancaster and Rodman (\cite{28}) also discussed solving different types of algebraic Riccati equations. Moreover, they derived a perturbation analysis for these matrix equations. \\

\textbf{Note}
(QR decomposition) The QR decomposition \cite{27} of a matrix $A \in
{\mathbb{R}}^{m\times n}$ with $m\geq n$, is a decomposition of the form $A =QR$, where $R$ is an $m\times n$ upper triangular matrix and $Q$ satisfies $QQ^T=Q^TQ=I$. Moreover, if $A$ has full column rank, then $R$ also has full column rank.\\\\

\textbf{Note}
(Cholesky decomposition) A Cholesky decomposition \cite{27} of a symmetric positive definite matrix $A \in
{\mathbb{R}}^{n\times n}$ is a decomposition of the form $A =
R^TR$, where $R$, known as the Cholesky factor of $A$, is an $n\times n$ nonsingular upper triangular matrix.\\\\

\textbf{Note}
(Spectral decomposition) \cite{27}
All eigenvalues of a symmetric matrix, $A\in {\mathbb{R}}^{n\times n}$, are real and there exists an orthonormal matrix with columns representing the corresponding eigenvectors. Thus, there exist an orthonormal matrix $U$ with columns equal to the eigenvectors of $A$ and a diagonal matrix $D$ containing the eigenvalues such that $A=UDU^T$. Also, if $A$ is positive definite, then all of its eigenvalues are positive, and so we can set $D=S^2$. Thus, spectral decomposition for a symmetric positive definite matrix $A$ is a decomposition of the form $A=US^2U^T$, where $U^TU=UU^T=I$ and $S$ is a diagonal matrix.\\\\
\begin{theorem}\cite{22}\label{70}
Assume $D, T \in {\mathbb{R}}^{m\times n}$ with $m\geq n$ are known and $rank(D)=rank(T)=n$. Let $D=QR$ be the QR factorization of $D$. Let $A=D^TD$ and $B=T^TT$. Define the matrix $\tilde{Q}=RBR^T$ and compute its spectral decomposition, that is,
$\tilde{Q}=RBR^T=U{\tilde{S}}^2U^T$. Then, (\ref{12}) has a unique solution, given by
$$X^{\ast}=R^{-1}U\tilde{S}U^TR^{-T}.$$
\end{theorem}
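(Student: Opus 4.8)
The plan is to verify that the matrix $X^{\ast}=R^{-1}U\tilde{S}U^TR^{-T}$ is symmetric positive definite and satisfies $XAX=B$ (which, by Theorem~\ref{13}, characterizes the stationary points of (\ref{12})), and then to invoke the uniqueness established in the Corollary. First I would record the structural consequences of the hypotheses. From the QR factorization $D=QR$ with $Q^TQ=I$ we get $A=D^TD=R^TR$, and since $D$ has full column rank, $R$ is a nonsingular $n\times n$ upper triangular matrix. Since $rank(T)=n$, the matrix $B=T^TT$ is symmetric positive definite, hence so is $\tilde{Q}=RBR^T$, being congruent to $B$ through the nonsingular matrix $R$. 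Therefore all eigenvalues of $\tilde{Q}$ are positive, the diagonal matrix $\tilde{S}$ (the positive square root of the eigenvalue matrix $\tilde{S}^2$ in the spectral decomposition $\tilde{Q}=U\tilde{S}^2U^T$) is well defined with positive diagonal, and $U\tilde{S}U^T$ is the unique symmetric positive definite square root of $\tilde{Q}$.

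Next I would reduce the equation $XAX=B$ to a matrix square-root problem via the substitution $W=RXR^T$, which is a bijection of the cone of symmetric positive definite matrices onto itself because $R$ is nonsingular. Using $A=R^TR$, a direct computation gives $W^2=RXR^TRXR^T=RX(R^TR)XR^T=RXAXR^T$, so $W^2=RBR^T=\tilde{Q}$ if and only if $XAX=B$. By the previous paragraph the unique symmetric positive definite $W$ solving $W^2=\tilde{Q}$ is $W=U\tilde{S}U^T$, whence $X=R^{-1}U\tilde{S}U^TR^{-T}=X^{\ast}$. That $X^{\ast}$ is admissible is also checked directly: it is symmetric because $\tilde{S}$ is diagonal, and writing $X^{\ast}=(R^{-1}U\tilde{S}^{1/2})(R^{-1}U\tilde{S}^{1/2})^T$ with $R^{-1}U\tilde{S}^{1/2}$ nonsingular exhibits it, by the Note on positive definite matrices, as positive definite.

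Finally, for existence and uniqueness of the minimizer: $\Phi(X)=\tr(AX+X^{-1}B)$ is convex on the cone of symmetric positive definite matrices and coercive there, since $\tr(X^{-1}B)\to\infty$ as $X$ approaches the boundary of the cone and $\tr(AX)\to\infty$ as $\|X\|\to\infty$, both $A$ and $B$ being positive definite; hence a minimizer exists in the open cone, it is the unique solution of the stationarity condition $XAX=B$, and therefore it equals $X^{\ast}$. I do not anticipate a deep obstacle here; the points needing care are the algebraic bookkeeping in the substitution $W=RXR^T$ (so that the inner factor is exactly $A=R^TR$), the explicit use of uniqueness of the symmetric positive definite square root of an SPD matrix, and the coercivity remark ensuring that the infimum in (\ref{12}) is attained in the open cone so that the stationarity characterization of Theorem~\ref{13} applies.
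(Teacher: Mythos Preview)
Your proof is correct and follows essentially the same path as the paper: from $D=QR$ one has $A=R^TR$, and then the verification $X^{\ast}AX^{\ast}=R^{-1}U\tilde{S}U^TR^{-T}(R^TR)R^{-1}U\tilde{S}U^TR^{-T}=R^{-1}U\tilde{S}^2U^TR^{-T}=R^{-1}\tilde{Q}R^{-T}=B$ is precisely what the paper does, after which uniqueness is delegated to the Corollary. Your substitution $W=RXR^T$ recasts that same computation as a derivation (reducing $XAX=B$ to $W^2=\tilde{Q}$ and invoking uniqueness of the SPD square root), and your explicit checks that $X^{\ast}$ is SPD and that $\Phi$ is coercive on the open cone are details the paper leaves implicit, but the underlying argument is the same.
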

\begin{proof}
Based on Theorem \ref{14} and the afterwards discussion, it is sufficient to show that $X^{\ast}$ satisfies the necessary optimality conditions, $X^{\ast}AX^{\ast}=B$. Note that from $D=QR$, we have \[A=D^TD=R^TQ^TQR=R^TR.\]
Substituting $X^{\ast}$, we have\\
\begin{eqnarray}\nonumber
X^{\ast}AX^{\ast}&=&R^{-1}U\tilde{S}U^TR^{-T}R^TRR^{-1}U\tilde{S}U^TR^{-T}\nonumber\\
&=&R^{-1}U{\tilde{S}}^2U^TR^{-T}=R^{-1}RBR^TR^{-T}=B.\nonumber
\end{eqnarray}
\end{proof}
\textbf{Note}
To compute $R$, it is also possible to first compute $A=D^TD$ and then calculate the Cholesky decomposition for $A$. However, because of more stability, in Theorem \ref{70} the QR decomposition of $D$ is used.\\\\
We are now ready to outline the steps of our proposed algorithm.\\\\

\textbf{Solving the EIV model for positive definite linear system using QR decomposition.}\\

Compute the QR decomposition for D and let $D=QR$.\\
Let $\tilde{Q}=RBR^T$, where $B=T^TT$ and compute the spectral decomposition of $\tilde{Q}$, that is, $\tilde{Q}=U{\tilde{S}}^2U^T.$\\
 Set $X^{\ast}=R^{-1}U\tilde{S}U^TR^{-T}.$\\
 Set $E=\tr((DX^{\ast}-T)^T(D-T{X^{\ast}}^{-1}))$.\\\\

Note that \textsc{Algorithm 1} computes the solution of (\ref{6})
directly.\\

The following theorem shows that by use of spectral decomposition of $A$ a method similar to the one introduced in \cite{22} is in hand for solving the continuous time Riccati equation.\\
\begin{theorem}\label{18}
Let $A=D^TD$ and $B=T^TT$ with $D,T \in
{\mathbb{R}}^{m \times n}$, $m\geq n$ and
$rank(D)=n.$ Let the spectral decomposition of $A$ be
$A=US^2U^T$. Define the matrix $\tilde{Q}=SU^TBUS$
 and compute its spectral decomposition, $\tilde{Q}=SU^TBUS=\bar{U}{\bar{S}}^2{\bar{U}}^T$.
Then, the unique minimizer of (\ref{12}) is $$X^{\ast}=US^{-1}\bar{U}\bar{S}{\bar{U}}^TS^{-1}U^T.$$
\end{theorem}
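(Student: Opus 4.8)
The plan is to mimic the proof of Theorem~\ref{70}, replacing the QR factorization of $D$ with the spectral decomposition of $A=D^TD$. By the Corollary together with Theorem~\ref{13}, problem~(\ref{12}) has a unique global minimizer, and a symmetric positive definite matrix is that minimizer precisely when it satisfies the first-order necessary condition $XAX=B$. So it suffices to verify two things about $X^{\ast}=US^{-1}\bar{U}\bar{S}\bar{U}^TS^{-1}U^T$: that it is symmetric positive definite, and that it solves $X^{\ast}AX^{\ast}=B$.

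First I would record the structural facts used throughout. Since $rank(D)=n$, the matrix $A=D^TD$ is symmetric positive definite, so in $A=US^2U^T$ the matrix $U$ is orthogonal ($U^TU=UU^T=I$) and $S$ is diagonal with positive diagonal entries, hence invertible. The matrix $\tilde{Q}=SU^TBUS$ is symmetric positive semidefinite, since $B=T^TT$ gives $\tilde{Q}=(TUS)^T(TUS)$; thus its spectral decomposition can legitimately be written with a squared diagonal factor $\bar{S}^2$ and orthogonal $\bar{U}$. For positive definiteness of $X^{\ast}$, write $X^{\ast}=ZZ^T$ with $Z=US^{-1}\bar{U}\bar{S}^{1/2}$; the factor $Z$ is nonsingular (using that $B$, hence $\tilde{Q}$, hence $\bar{S}$, is nonsingular — the same full-rank requirement present in Theorem~\ref{70}), so $X^{\ast}\succ 0$, and it is manifestly symmetric.

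The heart of the argument is the direct verification of $X^{\ast}AX^{\ast}=B$. Substituting $A=US^2U^T$ and $X^{\ast}$, I would use $U^TU=I$ to collapse the middle block $S^{-1}U^T\cdot US^2U^T\cdot US^{-1}=I$, then use $\bar{U}^T\bar{U}=I$ to merge the two $\bar{S}$ factors into $\bar{S}^2$, obtaining
\[
X^{\ast}AX^{\ast}=US^{-1}\bar{U}\bar{S}^2\bar{U}^TS^{-1}U^T=US^{-1}\tilde{Q}S^{-1}U^T=US^{-1}(SU^TBUS)S^{-1}U^T=UU^TBUU^T=B.
\]
Since $X^{\ast}\succ 0$ satisfies the first-order necessary condition, the Corollary — convexity of $\Phi(X)=\tr(AX+X^{-1}B)$ on the cone of symmetric positive definite matrices — upgrades this to the conclusion that $X^{\ast}$ is the unique minimizer of~(\ref{12}).

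The point I would flag as needing care, rather than a genuine obstacle, is the choice of square root: the equation $XAX=B$ admits several solutions, and one must know that selecting the nonnegative diagonal $\bar{S}$ (so that $X^{\ast}$ emerges symmetric positive definite) is exactly what isolates the minimizer. This is delivered for free by the convexity/uniqueness argument of the Corollary, so no additional work beyond citing it is required. A minor secondary remark is that, strictly, positive definiteness of $X^{\ast}$ (as opposed to mere positive semidefiniteness) also needs $rank(T)=n$, i.e.\ $B\succ 0$, in line with the hypotheses of Theorem~\ref{70}.
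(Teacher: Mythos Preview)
Your proposal is correct and follows essentially the same approach as the paper's own proof: a direct substitution verifying $X^{\ast}AX^{\ast}=B$, with the uniqueness inherited from the earlier convexity/KKT discussion. You are in fact slightly more careful than the paper, since you also verify positive definiteness of $X^{\ast}$ and flag the implicit need for $rank(T)=n$; the paper omits both points and simply computes the product.
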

\begin{proof}
Similar to the proof of Theorem \ref{70}, it is sufficient to show that the mentioned $X^{\ast}$ satisfies $X^{\ast}AX^{\ast}=B$. Substituting $X^{\ast}$, we have

\begin{eqnarray}\nonumber
X^{\ast}AX^{\ast}&= US^{-1}\bar{U}\bar{S}{\bar{U}}^TS^{-1}U^TUS^2U^TUS^{-1}\bar{U}\bar{S}{\bar{U}}^TS^{-1}U^T\nonumber\\
&= US^{-1}\bar{U}{\bar{S}}^2{\bar{U}}^TS^{-1}U^T=US^{-1}SU^TBUSS^{-1}U^T=B.\nonumber
\end{eqnarray}
\end{proof}
Next, based on Theorem ~\ref{18}, we outline an algorithm for solving (\ref{6}).\\\\

\textbf{Solving the EIV model for positive definite linear system using spectral decomposition.}\\

 Let $A=D^TD$ and compute its spectral decomposition: $A=US^2U^T.$\\
 Let $\tilde{Q}=SU^TBUS$, where $B=T^TT$ and compute the spectral decomposition of $\tilde{Q}$, that is, $\tilde{Q}=\tilde{U}{\tilde{S}}^2{\tilde{U}}^T.$\\
 Set $X^{\ast}=US^{-1}\tilde{U}\tilde{S}{\tilde{U}}^TS^{-1}U^T.$\\
 Set $E=\tr((DX^{\ast}-T)^T(D-T{X^{\ast}}^{-1}))$.\\\\

In Section 4 we generalize our proposed method for solving positive definite linear system of equations when the data matrix is rank deficient.
\section{Mathematical Solution: Rank Deficient Data Matrix}
Since the data matrix $D$ is usually produced from experimental
measurements, we may have $rank(D)<n$. Here, we are to generalize \textsc{Algorithm 1} for solving (\ref{6}), assuming that $rank(D)=r<n$. In Section 4.1 we outline two algorithms to compute the general solution of (\ref{6}). It will be shown that, in general, (\ref{6}) may not have a unique solution. Hence, in section 4.2 we discuss how to find a particular solution of (\ref{6}) having desirable characteristics for control problems.\\
\subsection{General solution}
Based on theorems ~\ref{13} and ~\ref{14}, a symmetric positive definite matrix $X^{\ast}$ is a solution of (\ref{6}) if and only if
\begin{equation}\label{19}
X^{\ast}AX^{\ast}=B.
\end{equation}
Therefore, in the following, we discuss how to find a symmetric positive definite matrix $X^{\ast}$ satisfying (\ref{19}).\\
First we note that in case $D$ and $T$ are rank deficient, there might be no solution for (\ref{19}), and if there is any, it is not necessarily a unique solution; see, e.g., \cite{22}. Higham \cite{22} considered to $X=B^{\frac{1}{2}}{(B^{\frac{1}{2}}AB^{\frac{1}{2}})}^{-\frac{1}{2}}B^{\frac{1}{2}}$ as a solution of (\ref{19}), which is symmetric and positive semidefinite. However, we are interested in finding a symmetric positive definite solution to (\ref{19}). Hence, in the following, first the necessary and sufficient conditions on $A$ and $B$ to guarantee the existence of positive definite solution to (\ref{19}) are discussed. We then outline two algrithms to compute such a solution.

Let the spectral decomposition of $A$ be $A=U\left(
                                               \begin{array}{cc}
                                                 S^2 & 0 \\
                                                 0 & 0 \\
                                               \end{array}
                                             \right)
U^T$, where $S^2\in {\mathbb{R}}^{r \times r}$ is a diagonal matrix having the positive eigenvalues of $A$ as its diagonal entries. Substituting the decomposition in (\ref{19}), we get\\
\begin{equation}\label{20}
X^{\ast}U\left(
                                               \begin{array}{cc}
                                                 S^2 & 0 \\
                                                 0 & 0 \\
                                               \end{array}
                                             \right)U^TX^{\ast}=B.
\end{equation}
Since $U$ is orthonormal, (\ref{20}) can be written as
\begin{center}
$U^TX^{\ast}U\left(
                                               \begin{array}{cc}
                                                 S^2 & 0 \\
                                                 0 & 0 \\
                                               \end{array}
                                             \right)U^TX^{\ast}U=U^TBU.$
\end{center}
Then, letting $\tilde{X}=U^TXU$ and $\tilde{B}=U^TBU$, we have
\begin{equation}\label{21}
\tilde{X}\left(
                                               \begin{array}{cc}
                                                 S^2 & 0 \\
                                                 0 & 0 \\
                                               \end{array}
                                             \right)\tilde{X}=\tilde{B}.
\end{equation}
Thus, the matrix $X=U\tilde{X}U^T$ is a solution of (\ref{6}) if and only if $\tilde{X}$ is symmetric positive definite and satisfies (\ref{21}). Substituting the block form $\tilde{X}=\left(
                                         \begin{array}{cc}
                                           {\tilde{X}}_{rr} & {\tilde{X}}_{r,n-r} \\
                                           {\tilde{X}}_{n-r,r} & {\tilde{X}}_{n-r,n-r} \\
                                         \end{array}
                                       \right)
$, where ${\tilde{X}}_{rr}\in {\mathbb{R}}^{r \times r}$, ${\tilde{X}}_{r,n-r}={\tilde{X}}_{n-r,r}^T \in {\mathbb{R}}^{r \times (n-r)}$ and ${\tilde{X}}_{n-r,n-r}\in {\mathbb{R}}^{(n-r) \times (n-r)}$, in (\ref{21}) leads to
\begin{equation}\nonumber
\left(
                                               \begin{array}{cc}
                                                 {\tilde{X}}_{rr}S^2{\tilde{X}}_{rr} & {\tilde{X}}_{rr}S^2{\tilde{X}}_{r,n-r} \\
                                                 {\tilde{X}}_{n-r,r}S^2{\tilde{X}}_{rr} & {\tilde{X}}_{n-r,r}S^2{\tilde{X}}_{r,n-r} \\
                                               \end{array}
                                             \right)=\tilde{B}=\left(
                                         \begin{array}{cc}
                                           {\tilde{B}}_{rr} & {\tilde{B}}_{r,n-r} \\
                                           {\tilde{B}}_{n-r,r} & {\tilde{B}}_{n-r,n-r} \\
                                         \end{array}
                                       \right),
                                       \end{equation}
 which is satisfied if and only if\\
 \begin{subequations}
 \begin{equation}\label{22}
{\tilde{X}}_{rr}S^2{\tilde{X}}_{rr}={\tilde{B}}_{rr},
 \end{equation}
 \begin{equation}\label{23}
 {\tilde{X}}_{rr}S^2{\tilde{X}}_{r,n-r}={\tilde{B}}_{r,n-r},
 \end{equation}
  \begin{equation}\label{24}
{\tilde{X}}_{n-r,r}S^2{\tilde{X}}_{r,n-r}={\tilde{B}}_{n-r,n-r}.
 \end{equation}
 \end{subequations}
Before discussing how to compute $\tilde{X}$, we show that if (\mbox{\ref{19}}) has a symmetric and positive definite solution, then ${\tilde{B}}_{rr}$ must be nonsingular. The matrix ${\tilde{X}}_{rr}$ as a main minor of the positive definite matrix $\tilde{X}$ is nonsingular. $S$ is also supposed to be nonsingular. Hence, it can be concluded from ({\ref{22}}) that ${\tilde{B}}_{rr}$ is nonsingular.

Let $\bar{D}=S$ and suppose $\bar{T}$ satisfies ${\bar{T}}^T{\bar{T}}={\tilde{B}}_{rr}$. Consider problem (\ref{6}) corresponding to the data and target matrices $\bar{D}$ and $\bar{T}$ as follows:
\begin{equation}\label{25}
\min\limits_{\bar{X}\succ 0} \tr((\bar{D}\bar{X}-\bar{T})^T (\bar{D}-\bar{T}{\bar{X}}^{-1})).
\end{equation}
We know from theorems ~\ref{13} and ~\ref{14} that the necessary and sufficient optimality conditions for the unique solution of problem (\ref{25}) implies (\ref{22}). Thus, ${\tilde{X}}_{rr}$ can be computed using \textsc{Algorithm 1} for the input arguments $\bar{D}$ and $\bar{T}$. Substituting the computed ${\tilde{X}}_{rr}$ in (\ref{23}), the linear system of equations
\begin{equation}\label{26}
 {\tilde{X}}_{rr}S^2{\tilde{X}}_{r,n-r}={\tilde{B}}_{r,n-r}
 \end{equation}
 arises, where ${\tilde{X}}_{rr}, S^2 \in {\mathbb{R}}^{r \times r}$ are known and
 ${\tilde{X}}_{r,n-r} \in {\mathbb{R}}^{r \times (n-r)}$ is to be computed. Since ${\tilde{X}}_{rr}$ is positive definite and $S^2$ is nonsingular, the coefficient matrix of the linear system (\ref{26}) is nonsingular and ${\tilde{X}}_{r,n-r}$ can be uniquely computed.\\ It is clear that since $\tilde{X}$ is symmetric, ${\tilde{X}}_{n-r,r}$ is the same as ${{\tilde{X}}_{r,n-r}}^T$.
 Now, we check whether the computed ${\tilde{X}}_{n-r,r}$ and ${\tilde{X}}_{r,n-r}$ satisfy (\ref{24}). Inconsistency of (\ref{25}) means that there is no symmetric positive definite matrix satisfying (\ref{22})-(\ref{24}), and if so, (\ref{6}) has no solution. Thus, in solving an specific positive definite system with rank deficient data and target matrices using the presented EIV model, a straightforward method to investigate the existence of solution is to check whether (\ref{24}) holds for the given data and target matrices. On the other hand, for numerical results, it is necessary to generate meaningful test problems. Hence, in the following two lemmas, we investigate the necessary and sufficient conditions for satisfaction of (\ref{24}).\\
 \begin{lemma}\label{27}
 Let the spectral decomposition of $A$ be determined as 
 \[A=U\left(
                                               \begin{array}{cc}
                                                 S^2 & 0 \\
                                                 0 & 0 \\
                                               \end{array}
                                             \right)U^T\]
                                             where $S^2 \in {\mathbb{R}}^{r \times r}$ and $rank(A)=rank(B)=r$.
 The necessary and sufficient condition for satisfaction of (\ref{24}) is
\[BU_r{({U_r}^T BU_r)}^{-1}{U_r}^TB -B\in \Null({U_{n-r}}^T).\]
 \end{lemma}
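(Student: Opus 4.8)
The plan is to reduce (\ref{24}) to a relation involving only the blocks of $\tilde{B}$, then translate that relation through the column partition $U=[U_r\ U_{n-r}]$, and finally invoke positive semidefiniteness of $B=T^TT$ to upgrade the result to the stated null-space condition.

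First I would use (\ref{22}) and (\ref{23}) to make the off-diagonal blocks of $\tilde{X}$ explicit. Since $S$ is nonsingular, inverting (\ref{22}) gives ${\tilde{X}}_{rr}^{-1}S^{-2}{\tilde{X}}_{rr}^{-1}={\tilde{B}}_{rr}^{-1}$, while (\ref{23}) gives ${\tilde{X}}_{r,n-r}=S^{-2}{\tilde{X}}_{rr}^{-1}{\tilde{B}}_{r,n-r}$, and symmetry of $\tilde{X}$ yields ${\tilde{X}}_{n-r,r}={\tilde{X}}_{r,n-r}^T={\tilde{B}}_{r,n-r}^T{\tilde{X}}_{rr}^{-1}S^{-2}$. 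Substituting these into the left-hand side of (\ref{24}), cancelling the factor $S^{-2}S^2=I$ in the middle and collapsing the rest via ${\tilde{X}}_{rr}^{-1}S^{-2}{\tilde{X}}_{rr}^{-1}={\tilde{B}}_{rr}^{-1}$, the identity (\ref{24}) becomes ${\tilde{B}}_{r,n-r}^T{\tilde{B}}_{rr}^{-1}{\tilde{B}}_{r,n-r}={\tilde{B}}_{n-r,n-r}$. Writing ${\tilde{B}}_{rr}=U_r^TBU_r$, ${\tilde{B}}_{r,n-r}=U_r^TBU_{n-r}$ and ${\tilde{B}}_{n-r,n-r}=U_{n-r}^TBU_{n-r}$ and using $B=B^T$, this is the same as $U_{n-r}^T M U_{n-r}=0$, where $M:=BU_r(U_r^TBU_r)^{-1}U_r^TB-B$.

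The last step is the one I expect to be the crux: passing from $U_{n-r}^T M U_{n-r}=0$ to the stronger assertion $M\in\Null(U_{n-r}^T)$ claimed by the lemma, since for a general symmetric $M$ these two are not equivalent. The point is that $B=T^TT\succeq 0$: because $U_r^TBU_r=(TU_r)^T(TU_r)$ is nonsingular (a fact recorded just before the lemma), $TU_r$ has full column rank, so $\Pi:=TU_r\big((TU_r)^T(TU_r)\big)^{-1}(TU_r)^T$ is the orthogonal projector onto $\mathrm{range}(TU_r)$, and a short computation identifies $M=T^T(\Pi-I)T$. Hence $M$ is symmetric and negative semidefinite, so, writing $M=-N^TN$ with $N=(I-\Pi)T$, the identity $U_{n-r}^T M U_{n-r}=0$ forces $NU_{n-r}=0$ and therefore $MU_{n-r}=0$; since $M$ is symmetric, $MU_{n-r}=0$ is the same as $U_{n-r}^T M=0$, i.e. every column of $M$ lies in $\Null(U_{n-r}^T)$. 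The converse implication is immediate, which gives the claimed equivalence. Throughout I would rely on the standing facts, established just before the statement, that $S$ and ${\tilde{B}}_{rr}$ are nonsingular and that ${\tilde{X}}_{rr}$ and ${\tilde{X}}_{r,n-r}$ are the well-defined matrices produced by the construction leading up to (\ref{24}).
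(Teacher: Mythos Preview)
Your proof is correct and follows essentially the same route as the paper: both invert (\ref{22}), solve (\ref{23}) for ${\tilde X}_{r,n-r}$, substitute into (\ref{24}) to obtain ${\tilde B}_{n-r,r}{\tilde B}_{rr}^{-1}{\tilde B}_{r,n-r}={\tilde B}_{n-r,n-r}$, and then rewrite this as $U_{n-r}^{T}MU_{n-r}=0$ with $M=BU_r(U_r^{T}BU_r)^{-1}U_r^{T}B-B$. The only difference is in the final passage from $U_{n-r}^{T}MU_{n-r}=0$ to $U_{n-r}^{T}M=0$: the paper simply cites \cite{20} for this equivalence, whereas you supply an explicit self-contained argument by observing that $M=T^{T}(\Pi-I)T$ with $\Pi$ the orthogonal projector onto $\mathrm{range}(TU_r)$, hence $M$ is negative semidefinite, which forces the upgrade. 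That extra justification is a welcome clarification but not a different method.
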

 \begin{proof}

From (\ref{22}), we have
  \begin{equation}\label{29}
 {{\tilde{X}}_{rr}}^{-1}S^{-2}{{\tilde{X}}_{rr}}^{-1}={{\tilde{B}}_{rr}}^{-1},
 \end{equation}
and from (\ref{24}), we get
   \begin{equation}\label{30}
 {\tilde{X}}_{r,n-r}=S^{-2}{{\tilde{X}}_{rr}}^{-1}{{\tilde{B}}_{r,n-r}},
 \end{equation}
\begin{equation}\nonumber
 {\tilde{X}}_{n-r,r}={\tilde{B}}_{n-r,r}{{\tilde{X}}_{rr}}^{-1}S^{-2}.
 \end{equation}
Manipulating (\ref{24}) with (\ref{29}) and (\ref{30}), we get
\begin{equation}\label{31}
 {\tilde{B}}_{n-r,r}{{\tilde{B}}_{rr}}^{-1}{\tilde{B}}_{r,n-r}= {\tilde{B}}_{n-r,n-r}.
 \end{equation}
 Considering the block form $U=\left(
                                 \begin{array}{cc}
                                   U_r & U_{n-r} \\
                                 \end{array}
                               \right)$, where $U_r \in {\mathbb{R}}^{n\times r}$ and $U_{n-r} \in {\mathbb{R}}^{n\times (n-r)}$, we have
 \begin{eqnarray}
 \tilde{B}=U^TBU&=&\left(
                   \begin{array}{c}
                     {U_r}^T \\
                     {U_{n-r}}^T \\
                   \end{array}
                 \right)B
 \left(
                                 \begin{array}{cc}
                                   U_r & U_{n-r} \\
                                 \end{array}
                               \right)\nonumber \\
                               &=&\left(
                                         \begin{array}{cc}
                                           {U_r}^T BU_r & {U_r}^T BU_{n-r} \\
                                           {U_{n-r}}^T BU_r & {U_{n-r}}^T BU_{n-r} \\
                                         \end{array}
                                       \right).\nonumber
\end{eqnarray}
Rewriting (\ref{31}) results in
 \begin{equation}\label{332}
 {U_{n-r}}^T BU_r{({U_r}^T BU_r)}^{-1}{U_r}^T BU_{n-r}={U_{n-r}}^T BU_{n-r}, 
 \end{equation}
 which is equivalent to (e.g., see \cite{20})
 \begin{equation}\label{32}
 BU_r{({U_r}^T BU_r)}^{-1}{U_r}^TB =B+Z,
 \end{equation}
where $Z\in {\mathbb{R}}^{n \times n}$ is in the null space of ${U_{n-r}}^T$. Thus, (\ref{19}) has a positive definite solution if and only if 
\begin{equation}\label{3344}
BU_r{({U_r}^T BU_r)}^{-1}{U_r}^TB -B\in \Null({U_{n-r}}^T).
\end{equation}

\end{proof}
\textbf{Note}
For real problems with arbitrary values of $D$ and $T$, the necessary and sufficient condition given in Lemma \mbox{\ref{27}} may not be satisfied, in general. Hence, we are to propose a threshold to determine if
\begin{equation}
F={U_{n-r}}^T\left(BU_r{({U_r}^T BU_r)}^{-1}{U_r}^TB -B\right)
\end{equation}
is close enough to zero. In the following, we show that if $\|F\|<\delta$, for a sufficiently small scalar $\delta$, then $X_{r(n-r)}$ computed from (\ref{23}) is a proper approximation for the solution of (\ref{24}). Substituting $F$ in ({\ref{332}}), we have \begin{equation}
{\tilde{B}}_{n-r,r}{{\tilde{B}}_{rr}}^{-1}{\tilde{B}}_{r,n-r}- {\tilde{B}}_{n-r,n-r}=FU_{n-r},
\end{equation} 
and 
\begin{equation}
{\tilde{X}}_{n-r,r}S^2{\tilde{X}}_{r,n-r}-{\tilde{B}}_{n-r,n-r}=FU_{n-r}.
\end{equation}
Let $X^{\ast}$ satisfy (\ref{24}), that is,
\begin{equation}
{X^{\ast}}_{n-r,r}S^2{X^{\ast}}_{r,n-r}-{\tilde{B}}_{n-r,n-r}=0.
\end{equation}
Then, we have
\begin{equation}\label{331}
{\tilde{X}}_{n-r,r}S^2{\tilde{X}}_{r,n-r}-{X^{\ast}}_{n-r,r}S^2{X^{\ast}}_{r,n-r}=FU_{n-r}.
\end{equation}
Letting $\tilde{Y}=S{\tilde{X}}_{r,n-r}$ and $Y^{\ast}=S{X^{\ast}}_{r,n-r}$, (\ref{331}), we get
\begin{equation}
{\tilde{Y}}^T\tilde{Y}-{Y^{\ast}}^TY^{\ast}=FU_{n-r}
\end{equation}

and
\begin{equation}\label{337}
{\tilde{y}}_i^T{\tilde{y}}_j-{y_i^{\ast}}^Ty_j^{\ast}=(FU_{n-r})_{ij},
\end{equation}
where ${\tilde{y_i}}$ and ${y_i^{\ast}}$ are the $i$th column of $\tilde{Y}$ and $Y^{\ast}$ respectively. Now, since the 2 norm of each column of $U_{n-r}$ is equal to one, every entry of $U_{n-r}$ is less than or equal to one. Moreover, under the assumption $\|F\|<\delta$, none of the entries of $F$ are greater than $\delta$. Hence, we have
\begin{equation}\label{4444}
|{(FU_{n-r})}_{ij}|=|f_i^Tu_j|\leq |f_{i1}+\cdots +f_{i(n-r)}|< (n-r)\delta,
\end{equation}
where $f_i^T$ and $u_j$ are the $i$th row of $F$ and the $j$th column of $U_{n-r}$ respectively. Now, (\ref{337}) together with (\ref{4444}) gives
\begin{equation}
|{\tilde{y_i}}^T\tilde{y_j}-{y_i^{\ast}}^Ty_j^{\ast}|< {(n-r)}\delta.
\end{equation}
Hence, there is a constant $c_{ij}$ such that
\begin{equation}\label{555}
|{\tilde{y}}_{ij}-y_{ij}^{\ast}|< c_{ij},
\end{equation}
where $\tilde{y}_{ij}$ and $y_{ij}^{\ast}$ are the $(i,j)$th entry of $\tilde{Y}$ and $Y^{\ast}$ respectively. Letting $S=diag(s_1,\cdots,s_r)$, from (\ref{555}) we get
\begin{equation}
|s_i||({\tilde{X}}_{n-r,r})_{ij}-(X^{\ast}_{n-r,r})_{ij}|\leq c_{ij},
\end{equation}
for $i=1,\cdots,r$ and $j=1,\cdots,n-r$ and
\[{\|{\tilde{X}}_{r,n-r}-{X^{\ast}}_{r,n-r}\|}\leq C.\]
Hence, assuming
\[{\tilde{X}}_{r,r}={X^{\ast}}_{r,r},\]
we have $\|\tilde{X}-X^{\ast}\|<\alpha$ which means that if $FU_{n-r}$ is close enough to zero, then the computed solution from the approximate satisfaction of (\ref{24}) would be close enough to the exact solution.\\\\
In the following lemma, we give a sufficient condition which guarantees the existence of a solution for \mbox{(\ref{19})}. We later use this result to generate consistent test problems in Section 6.
\begin{lemma}\label{117}
Let the spectral decomposition of $B$ be $B=V\left(
                                               \begin{array}{cc}
                                                 {\sum}^2 & 0 \\
                                                 0 & 0 \\
                                               \end{array}
                                             \right)V^T,$ where ${\sum}^2 \in {\mathbb{R}}^{r \times r}$ and $rank(A)=rank(B)=r$.
A sufficient condition for satisfaction of (\ref{24}) is that
  \begin{equation}\label{28}
 V=U\left(
                                               \begin{array}{cc}
                                                 Q & 0 \\
                                                 0 & P \\
                                               \end{array}
                                             \right),
  \end{equation}
where $Q \in {\mathbb{R}}^{r \times r}$ and $P \in {\mathbb{R}}^{(n-r) \times (n-r)}$ satisfy $QQ^T=Q^TQ=I$ and $PP^T=P^TP=I$.
\end{lemma}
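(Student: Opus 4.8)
The plan is to compute the $2\times 2$ block form of $\tilde{B}=U^TBU$ directly from the hypothesis (\ref{28}); the point is that the special structure of $V$ forces $\tilde{B}$ to be block diagonal, after which (\ref{23}) and (\ref{24}) become trivial.

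Substituting (\ref{28}) into the spectral decomposition of $B$ and multiplying out, using $QQ^T=Q^TQ=I$ and $PP^T=P^TP=I$, gives
\[
B=U\left(\begin{array}{cc} Q & 0 \\ 0 & P \end{array}\right)\left(\begin{array}{cc} \Sigma^2 & 0 \\ 0 & 0 \end{array}\right)\left(\begin{array}{cc} Q^T & 0 \\ 0 & P^T \end{array}\right)U^T
= U\left(\begin{array}{cc} Q\Sigma^2 Q^T & 0 \\ 0 & 0 \end{array}\right)U^T .
\]
Hence, with the partition $U=(U_r\ U_{n-r})$ conforming to $S^2$, one reads off $\tilde{B}_{rr}=Q\Sigma^2 Q^T$ and $\tilde{B}_{r,n-r}=\tilde{B}_{n-r,r}=\tilde{B}_{n-r,n-r}=0$. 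Since $\Sigma^2\succ 0$ and $Q$ is orthogonal, $\tilde{B}_{rr}$ is symmetric positive definite, so (\ref{22}) has the unique symmetric positive definite solution $\tilde{X}_{rr}$ produced by Theorem~\ref{70} (applied, as in the discussion preceding Lemma~\ref{27}, with $\bar D=S$ and any $\bar T$ satisfying $\bar T^T\bar T=\tilde B_{rr}$, e.g. $\bar T=\Sigma Q^T$). Then $\tilde{X}_{rr}S^2$ is nonsingular, so (\ref{23}) forces $\tilde{X}_{r,n-r}=0$, and therefore (\ref{24}) reduces to $0=\tilde{B}_{n-r,n-r}=0$, which holds. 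Completing $\tilde{X}$ with any symmetric positive definite $\tilde{X}_{n-r,n-r}$, say the identity, makes $\tilde{X}$ block diagonal with positive definite blocks, hence positive definite, so $X=U\tilde{X}U^T$ is a symmetric positive definite solution of (\ref{19}).

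Equivalently, one can appeal to Lemma~\ref{27}: the same computation gives $B=U_r(Q\Sigma^2 Q^T)U_r^T$, whence $U_r^TBU_r=Q\Sigma^2 Q^T$, $BU_r=U_r(Q\Sigma^2 Q^T)$ and $U_r^TB=(Q\Sigma^2 Q^T)U_r^T$, so that $BU_r(U_r^TBU_r)^{-1}U_r^TB=U_r(Q\Sigma^2 Q^T)U_r^T=B$ and the matrix appearing in (\ref{3344}) is the zero matrix, which certainly lies in $\Null(U_{n-r}^T)$; Lemma~\ref{27} then yields (\ref{24}).

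I do not expect a genuine obstacle; the argument is a short verification. The only places needing a little care are that the block multiplication really does collapse onto a single diagonal block — this is exactly where the form (\ref{28}) is used, and it fails for a generic orthogonal $V$ — and that the invertibility of $\tilde{B}_{rr}$ implicit in Lemma~\ref{27} is automatic here because $\tilde{B}_{rr}=Q\Sigma^2 Q^T$ is positive definite.
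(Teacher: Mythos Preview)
Your argument is correct. Your route, however, differs from the paper's. You verify sufficiency by a direct computation: substituting (\ref{28}) into the spectral decomposition of $B$ immediately yields $\tilde B=U^TBU=\mathrm{diag}(Q\Sigma^2Q^T,0)$, whence $\tilde B_{r,n-r}=0$ forces $\tilde X_{r,n-r}=0$ via (\ref{23}) and (\ref{24}) collapses to $0=0$. The paper instead works backwards from the necessary and sufficient condition of Lemma~\ref{27}: it sets $Z=0$ in (\ref{32}), passes to the pseudoinverse identity (\ref{33}) with $W=0$, and then unwinds what this forces on $M=U_r^TV_r$ and on $U_{n-r}^TV_r$, arriving at $V_r=U_rQ$ and $V_{n-r}=U_{n-r}P$. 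The paper's approach explains how one might \emph{discover} the block form (\ref{28}) as a natural specialization of the general condition, but as a proof of sufficiency it is considerably more indirect; your two-line block multiplication (or, equivalently, your observation that $BU_r(U_r^TBU_r)^{-1}U_r^TB=B$ so that (\ref{3344}) holds with the zero matrix) is the cleaner verification.
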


\begin{proof}
A possible choice for $Z$ in Lemma \ref{117} is zero, for which (\ref{32}) is equivalent to
  \begin{equation}\label{33}
 U_r{({U_r}^T BU_r)}^{-1}{U_r}^T=B^++W,
 \end{equation}
with $W\in {\mathbb{R}}^{n \times n}$ in the null space of $B$. To obtain a simplified sufficient condition for existence of a positive definite solution to (\ref{19}), we let $W=0$. Multiplying (\ref{33}) by ${U_r}^T$ and $U_r$ respectively on the left and right, and substituting the spectral decomposition of $B$, we get
   \begin{equation}\label{34}
{({U_r}^T V_r{\sum}^2{V_r}^TU_r)}^{-1}={U_r}^TB^+U_r={U_r}^TV_r{\sum}^{-2}{V_r}^TU_r.
  \end{equation}
Letting $M={U_r}^T V_r$, we get
    \begin{equation}\nonumber
{(M{\sum}^2M^T)}^{-1}=M{\sum}^{-2}M^T.
  \end{equation}
Since $M$ has full rank, we get
 \begin{equation}\label{35}
M^{-T}{\sum}^{-2}M^{-1}=M{\sum}^{-2}M^T. \\
\end{equation}
Now, since ${\sum}^{-2}$ is nonsingular, (\ref{35}) holds if and only if
\begin{equation}\label{36}
 M^TM=I.
\end{equation}
This leads to\\
    \begin{equation}\label{44}
{({U_r}^T V_r)}^T{U_r}^T V_r={V_r}^T U_r{U_r}^T V_r=I.
  \end{equation}
Since $U$ is orthonormal, we have $UU^T=U_r{U_r}^T+U_{n-r}{U_{n-r}}^T=I$. Hence, we get\\
\begin{equation}\label{45}
U_r{U_r}^T=I-U_{n-r}{U_{n-r}}^T.
\end{equation}
Substituting (\ref{45}) in (\ref{44}), we get
    \begin{equation}\nonumber
{V_r}^T (I-U_{n-r}{U_{n-r}}^T) V_r=I-{V_r}^TU_{n-r}{U_{n-r}}^T V_r=I,
  \end{equation}
which is satisfied if and only if ${U_{n-r}}^T V_r=0$. Since the columns of $U_r$ form an orthogonal basis for the null space of ${U_{n-r}}^T$ \cite{27}, it can be concluded that each column of $V_r$ is a linear combination of the columns of $U_r$. Thus,
   \begin{equation}\label{46}
V_r=U_rQ
  \end{equation}
is a necessary condition for (\ref{36}) to be satisfied, and since both $U_r$ and $V_r$ have orthogonal columns, $Q\in {\mathbb{R}}^{r \times r}$ satisfies $QQ^T=Q^TQ=I$. On the other hand, we know from the definition of the spectral decomposition that $VV^T=UU^T=I$. Thus,
 \begin{eqnarray}\nonumber
& V_r{V_r}^T+V_{n-r}{V_{n-r}}^T=I,\\
& U_r{U_r}^T+U_{n-r}{U_{n-r}}^T=I. \label{47}
\end{eqnarray}
Manipulating (\ref{46}) with (\ref{47}), we get
   \begin{equation}\label{48}
V_{n-r}{V_{n-r}}^T=U_{n-r}{U_{n-r}}^T,
  \end{equation}
which holds if and only if there exists a matrix $P\in  {\mathbb{R}}^{(n-r) \times (n-r)}$ such that $PP^T=P^TP=I$ and
\begin{equation}\label{49}
V_{n-r}=PU_{n-r}.
\end{equation}
It can be concluded from (\ref{46}) and (\ref{49}) that $V=U\left(
                                               \begin{array}{cc}
                                                 Q & 0 \\
                                                 0 & P \\
                                               \end{array}
                                             \right)$, where $QQ^T=Q^TQ=I$ and $PP^T=P^TP=I$.
\end{proof}

\begin{corollary}\label{89}
The matrices $P$ and $Q$ defined in Lemma \ref{27} can set to be rotation matrices \cite{27} to satisfy
 \begin{eqnarray}\nonumber
& PP^T=P^TP=I,\\
& QQ^T=Q^TQ=I. \nonumber
\end{eqnarray}
Thus, to compute a target matrix, $T$, satisfying Lemma \ref{27}, it is sufficient to first compute $V$ from (\ref{28}) with $Q\in {\mathbb{R}}^{r \times r}$ and $P \in {\mathbb{R}}^{(n-r) \times (n-r)}$ arbitrary rotation matrices and $U$ as defined in Lemma \ref{27} and then set $T=\bar{U}\left(
                                               \begin{array}{cc}
                                                 \sum & 0 \\
                                                 0 & 0 \\
                                               \end{array}
                                             \right)V^T$, where $\bar{U} \in {\mathbb{R}}^{m \times m}$ and $\sum \in {\mathbb{R}}^{r \times r}$ are arbitrary orthonormal and diagonal matrices.
                                             \end{corollary}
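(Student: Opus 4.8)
I would prove the corollary by showing it is a direct, constructive instance of Lemma~\ref{117}. The first assertion needs almost nothing: a rotation matrix is, by definition, orthogonal (in fact special orthogonal), so any rotation matrix $P$ automatically satisfies $PP^T=P^TP=I$, and likewise for $Q$. Hence choosing $P$ and $Q$ in Lemma~\ref{117} to be rotation matrices is a legitimate specialization of that lemma's hypotheses; it is a convenient one, since rotation matrices of any size are trivial to generate, which is exactly what is needed to manufacture consistent test problems.

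For the second assertion, I would start from the spectral decomposition $A=U\left(\begin{array}{cc}S^2 & 0\\ 0 & 0\end{array}\right)U^T$ with $S^2\in{\mathbb{R}}^{r\times r}$ nonsingular, fix rotation matrices $Q\in{\mathbb{R}}^{r\times r}$ and $P\in{\mathbb{R}}^{(n-r)\times(n-r)}$, and set $V=U\left(\begin{array}{cc}Q & 0\\ 0 & P\end{array}\right)$. A one-line check shows $V$ is orthonormal, being the product of the orthonormal matrix $U$ and the block-diagonal orthonormal matrix with blocks $Q$ and $P$. Next, for any nonsingular diagonal ${\sum}\in{\mathbb{R}}^{r\times r}$ and any orthonormal $\bar U\in{\mathbb{R}}^{m\times m}$, I would put $T=\bar U\left(\begin{array}{cc}\sum & 0\\ 0 & 0\end{array}\right)V^T$, the middle factor being $m\times n$, and compute $B=T^TT=V\left(\begin{array}{cc}\sum & 0\\ 0 & 0\end{array}\right)^T\bar U^T\bar U\left(\begin{array}{cc}\sum & 0\\ 0 & 0\end{array}\right)V^T=V\left(\begin{array}{cc}{\sum}^2 & 0\\ 0 & 0\end{array}\right)V^T$, using $\bar U^T\bar U=I_m$ and the fact that ${\sum}$ is diagonal, so that ${\sum}^T{\sum}={\sum}^2$.

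It then remains only to read off that $B=V\left(\begin{array}{cc}{\sum}^2 & 0\\ 0 & 0\end{array}\right)V^T$ is the spectral decomposition of $B$ (since $V$ is orthonormal and ${\sum}^2$ is diagonal with positive entries), whence $\mathrm{rank}(B)=r=\mathrm{rank}(A)$, and that $V=U\left(\begin{array}{cc}Q & 0\\ 0 & P\end{array}\right)$ is exactly the form required in Lemma~\ref{117}. Invoking that lemma, condition~(\ref{24}) is satisfied for the pair $(D,T)$ with $D^TD=A$; hence (\ref{19})---equivalently problem~(\ref{6})---has a symmetric positive definite solution.

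The computation is essentially routine; the only points demanding care are the non-square block bookkeeping in forming $T^TT$ (the middle factor is $m\times n$, its transpose $n\times m$, and the surviving block is ${\sum}^T{\sum}={\sum}^2$) and the hypothesis that ${\sum}$ be nonsingular, which is what forces $\mathrm{rank}(B)=r$ rather than something smaller. I do not see any genuine obstacle beyond these checks, so the main effort is simply organizing the construction so that the hypotheses of Lemma~\ref{117} are visibly met.
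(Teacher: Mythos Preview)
Your proposal is correct and supplies exactly the verification the paper leaves implicit: in the paper the corollary is stated without proof, as an immediate consequence of Lemma~\ref{117}, and your argument is precisely the routine unpacking of that consequence (rotation matrices are orthogonal, so they are admissible choices for $P,Q$; the indicated $T$ then has $B=T^TT=V\mathrm{diag}({\textstyle\sum}^2,0)V^T$ as spectral decomposition with $V$ of the required block form, so Lemma~\ref{117} applies). There is nothing to compare beyond this---your write-up simply makes explicit what the paper treats as self-evident, and even catches that the corollary's reference to ``Lemma~\ref{27}'' should really point to Lemma~\ref{117}, where $P$ and $Q$ are actually introduced.
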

Thus, problem (\ref{6}) has a solution if and only if the data and target matrices satisfy (\ref{3344}).
In this case, ${\tilde{X}}_{rr}$, ${\tilde{X}}_{r,n-r}$ and its transpose, ${\tilde{X}}_{n-r,r}$, are respectively computed from (\ref{22}) and (\ref{23}). Hence, the only remaining step is to compute ${\tilde{X}}_{n-r,n-r}$ so that $\tilde{X}$ is symmetric and positive definite.\\
We know that $\tilde{X}$ is symmetric positive definite if and only if there exists a nonsingular lower triangular matrix $L\in {\mathbb{R}}^{n \times n}$ so that
\begin{equation}\label{37}
\tilde{X}=LL^T,
\end{equation}
where $L$ is lower triangular and nonsingular. Considering the block forms
$\tilde{X}=\left(
                                         \begin{array}{cc}
                                           {\tilde{X}}_{rr} & {\tilde{X}}_{r,n-r} \\
                                           {\tilde{X}}_{n-r,r} & {\tilde{X}}_{n-r,n-r} \\
                                         \end{array}
                                       \right)$ and $L=\left(
                                         \begin{array}{cc}
                                           L_{rr} & 0 \\
                                           L_{n-r,r} & L_{n-r,n-r} \\
                                         \end{array}
                                       \right)$, where $L_{n-r,r}$ is an $(n-r) \times r$ matrix and $L_{rr} \in {\mathbb{R}}^{r \times r}$ and $L_{n-r,n-r} \in {\mathbb{R}}^{(n-r) \times (n-r)}$ are nonsingular lower triangular matrices, we get
\begin{eqnarray}
\left(
                                         \begin{array}{cc}
                                           {\tilde{X}}_{rr} & {\tilde{X}}_{r,n-r} \\
                                           {\tilde{X}}_{n-r,r} & {\tilde{X}}_{n-r,n-r} \\
                                         \end{array}
                                       \right)&=& \nonumber \\
         &&   \left(
                                         \begin{array}{cc}
                                           L_{rr} & 0 \\
                                           L_{n-r,r} & L_{n-r,n-r} \\
                                         \end{array}
                                       \right)\left(
                                         \begin{array}{cc}
                                           {L_{rr}}^T & {L_{n-r,r}}^T \\
                                           0 & {L_{n-r,n-r}}^T \\
                                         \end{array}
                                       \right).\label{51}
\end{eqnarray}
 Thus,
\begin{subequations}
\begin{equation}\label{40}
  {\tilde{X}}_{rr}=L_{rr}{L_{rr}}^T,
\end{equation}
\begin{equation}\label{41}
{\tilde{X}}_{r,n-r}=L_{rr}{L_{n-r,r}}^T,
\end{equation}
\begin{equation}\label{42}
{\tilde{X}}_{n-r,r}=L_{n-r,r}{L_{rr}}^T,
\end{equation}
\begin{equation}\label{43}
{\tilde{X}}_{n-r,n-r}=L_{n-r,r}{L_{n-r,r}}^T+L_{n-r,n-r}{L_{n-r,n-r}}^T.
\end{equation}
\end{subequations}
Therefore, to compute a symmetric positive definite $\tilde{X}$, (\ref{40})--(\ref{43}) must be satisfied. Let ${\tilde{X}}_{rr}=\tilde{L}{\tilde{L}}^T$ be the Cholesky decomposition of ${\tilde{X}}_{rr}$. $L_{rr}=\tilde{L}$ satisfies (\ref{40}). Substituting $L_{rr}$ in (\ref{41}), ${L_{n-r,r}}^T$ is computed uniquely by solving the resulting linear system. Since (\ref{42}) is transpose of (\ref{41}), it does not give any additional information. Finally, to compute a matrix ${\tilde{X}}_{n-r,n-r}$ to satisfy (\ref{43}), it is sufficient to choose an arbitrary lower triangular nonsingular matrix $L_{n-r,n-r}$ and substitute it in (\ref{43}). The resulting ${\tilde{X}}_{n-r,n-r}$ gives a symmetric positive definite $\tilde{X}$ as follows:
\begin{center}
$\tilde{X}=\left(
                                         \begin{array}{cc}
                                           {\tilde{X}}_{rr} & {\tilde{X}}_{r,n-r} \\
                                           {\tilde{X}}_{n-r,r} & {\tilde{X}}_{n-r,n-r} \\
                                         \end{array}
                                       \right).$
\end{center}
                                       Now, based on the above discussion, we outline the steps of our algorithm for solving (\ref{6}) in the case $rank(D)=r<n.$\\\\
\newpage

\textbf{Solving the EIV model for positive definite linear system with rank deficient data and target matrices using spectral decomposition.}\\

 $\delta$ as the upper bounds for absolute error is taken to be close to the machine (or user's) zero.\\
 Let $A=D^TD$ and compute its spectral decomposition:
 \begin{center}
$A=U\left(
\begin{array}{cc}
S^2 & 0 \\
0 & 0
\end{array}\right)U^T.$
\end{center}
 Let $B=T^TT$ and $\tilde{B}=U^TBU$.\\
 Compute $rank(D)=r$ and let
\begin{eqnarray}\nonumber
{\tilde{B}}_{rr}=\tilde{B}(1:r,1:r),\nonumber\\
{\tilde{B}}_{r,n-r}=\tilde{B}(1:r,r+1:n),\nonumber\\
{\tilde{B}}_{n-r,n-r}=\tilde{B}(r+1:n,r+1:n)\nonumber
\end{eqnarray}
 Let $\bar{D}=S$, assume $\bar{T}$ satisfies ${\tilde{B}}_{rr}={\bar{T}}^T{\bar{T}}$.\\
 Perform \textbf{Algorithm 1} with input parameters $D=\bar{D}$ and $T=\bar{T}$, and let ${\tilde{X}}_{rr}=X^{\ast}$.\\
 Solve the linear system (\ref{23}) to compute ${\tilde{X}}_{r,n-r}$ and let ${\tilde{X}}_{n-r,r}={{\tilde{X}}_{r,n-r}}^T$.\\
 Compute the spectral decomposition for $B$, that is, $B=V\left(
                                         \begin{array}{cc}
                                           D^2 & 0 \\
                                           0 & 0 \\
                                         \end{array}
                                       \right)V^T.$
 Compute $M={U_r}^TV_r$.\\
If $\|{U_{n-r}}^T(BU_r{({U_r}^T BU_r)}^{-1}{U_r}^T B-B)\|\geq \delta$ \textbf{stop} ((\ref{6}) has no solution)\\
Else\\
Let the Cholesky decomposition of ${\tilde{X}}_{rr}$ be ${\tilde{X}}_{rr}=\tilde{L}{\tilde{L}}^T$ and set $L_{rr}=\tilde{L}$.\\
 Solve the lower triangular system (\ref{41}) to compute $L_{n-r,r}$.\\
 Let $L_{n-r,n-r}\in {\mathbb{R}}^{(n-r)\times (n-r)}$ be an arbitrary nonsingular lower triangular matrix and compute ${\tilde{X}}_{n-r,n-r}$ using (\ref{43}).\\
 Let $\tilde{X}=\left(
                                         \begin{array}{cc}
                                           {\tilde{X}}_{rr} & {\tilde{X}}_{r,n-r} \\
                                           {\tilde{X}}_{n-r,r} & {\tilde{X}}_{n-r,n-r} \\
                                         \end{array}
                                       \right)$ and $X^{\ast}=U\tilde{X}U^T$.\\
 Compute $E=\tr((DX^{\ast}-T)(D-T{X^{\ast}}^{-1})).$\\
EndIf.\\\\
Next, we show how to use the complete orthogonal decomposition of the data matrix $D$ instead of the spectral decomposition of $A$.\\

\textbf{Note} (Complete Orthogonal Decomposition) \cite{27}
Let $A\in {\mathbb{R}}^{m\times n}$ be an arbitrary matrix with $rank(A)=r$. There exist $R \in {\mathbb{R}}^{r\times r}$, $U\in {\mathbb{R}}^{m\times m}$ and $V\in {\mathbb{R}}^{n\times n}$ so that $R\in {\mathbb{R}}^{r\times r}$ is upper triangular, $UU^T=U^TU=I$, $VV^T=V^TV=I$ and $A=U\left(
                                         \begin{array}{cc}
                                           R & 0 \\
                                           0 & 0 \\
                                         \end{array}
                                       \right)V^T.$\\\\
                                       
Next, \textsc{Algorithm 4} is presented using the complete orthogonal decomposition of $D$.\\
\newpage

\textbf{Solving the EIV model for positive definite linear system with rank deficient data and target matrices using complete orthogonal decomposition.}\\

 $\delta$ as the upper bounds for absolute error is taken to be close to the machine (or user's) zero.\\
 Compute the complete orthogonal decomposition of $D$, that is,
\begin{equation}\nonumber
D=U\left(
                                         \begin{array}{cc}
                                           R & 0 \\
                                           0 & 0 \\
                                         \end{array}
                                       \right)V^T.
                                       \end{equation}
 Let $A=D^TD=V_rR^TR{V_r}^T$, $B=T^TT$ and $\tilde{B}=V^TBV$, where $V_r$ consists of the first $r$ columns of $V$.\\
 Compute $rank(D)=r$ and let
\begin{eqnarray}\nonumber
{\tilde{B}}_{rr}=\tilde{B}(1:r,1:r),\nonumber\\
{\tilde{B}}_{r,n-r}=\tilde{B}(1:r,r+1:n),\nonumber\\
{\tilde{B}}_{n-r,n-r}=\tilde{B}(r+1:n,r+1:n).\nonumber
\end{eqnarray}
 Let $\bar{D}=R$, assume $\bar{T}$ satisfies ${\tilde{B}}_{rr}={\bar{T}}^T{\bar{T}}$.\\
 Perform \textbf{Algorithm 1} with input parameters $D=\bar{D}$ and $T=\bar{T}$, and let ${\tilde{X}}_{rr}=X^{\ast}$.\\
 Solve the linear system (\ref{23}) to compute ${\tilde{X}}_{r,n-r}$ and let ${\tilde{X}}_{n-r,r}={{\tilde{X}}_{r,n-r}}^T$.\\
 Compute the spectral decomposition for $B$, that is, $B=V\left(
                                         \begin{array}{cc}
                                           D^2 & 0 \\
                                           0 & 0 \\
                                         \end{array}
                                       \right)V^T.$
 Compute $M={U_r}^TV_r$.\\
If {$\|{U_{n-r}}^T(BU_r{({U_r}^T BU_r)}^{-1}{U_r}^T B-B)\|\geq \delta$}
\textbf{stop} ((\ref{6}) has no solution)\\
Else\\
 Let the Cholesky decomposition of ${\tilde{X}}_{rr}$ be ${\tilde{X}}_{rr}=\tilde{L}{\tilde{L}}^T$ and set $L_{rr}=\tilde{L}$.\\
Solve the lower triangular system (\ref{41}) to compute $L_{n-r,r}$.\\
 Let $L_{n-r,n-r}\in {\mathbb{R}}^{(n-r)\times (n-r)}$ be an arbitrary nonsingular lower triangular matrix and compute ${\tilde{X}}_{n-r,n-r}$ using (\ref{43}).\\
 Let $\tilde{X}=\left(
                                         \begin{array}{cc}
                                           {\tilde{X}}_{rr} & {\tilde{X}}_{r,n-r} \\
                                           {\tilde{X}}_{n-r,r} & {\tilde{X}}_{n-r,n-r} \\
                                         \end{array}
                                       \right)$ and $X^{\ast}=U\tilde{X}U^T$.\\
 Compute $E=\tr((DX^{\ast}-T)(D-T{X^{\ast}}^{-1})).$\\
EndIf.\\\\
Thus, based on the above study, the computational complexity of PDEIV-QR is lower than that of PDEIV-Spec, for all matrix sizes. But, for the case of rank deficient data matrix, depending on the matrix size and rank, one of the algorithms PDEIV-RD-Spec and PDEIV-RD-COD may have a lower computational complexity.\\

\end{document}